\theoremstyle{plain}
\newtheorem{theorem}{Theorem}%[section]
\newtheorem*{theorem*}{Theorem}%[section]
\newtheorem{lemma}[theorem]{Lemma}
\newtheorem{proposition}[theorem]{Proposition}
\newcommand{\NN}{\mathbf{N}}
\newcommand{\ZZ}{\mathbf{Z}}
\begin{document}

\title[The topological full group of a minimal Cantor {$\ZZ^2$}-system]{On the topological full group\\ of a minimal Cantor {$\ZZ^2$}-system}
\author[G. Elek and N. Monod]{G\' abor Elek and Nicolas Monod}
\address{EPFL, 1015 Lausanne, Switzerland}
\thanks{Work supported in part by a Marie Curie grant, the European Research Council and the Swiss National Science Foundation.}
%\date{December 2011}
%
\begin{abstract}
Grigorchuk and Medynets recently announced that the topological full group of a
minimal Cantor $\ZZ$-action is amenable. They asked whether the statement holds
for all minimal Cantor actions of general amenable groups as well. We answer in the negative by producing
a minimal Cantor $\ZZ^2$-action for which the topological full group contains a non-abelian free group.
\end{abstract}
\maketitle

%==============================================================================================
%==============================================================================================
\section{Introduction}
Let $G$ be a group acting on a compact space $\Sigma$ by homeomorphisms. The \emph{topological full group} associated to this action is the group of all homeomorphisms of $\Sigma$ that are piecewise given by elements of $G$, each piece being open. Thus there are finitely many pieces at a time, all are clopen, and this construction is most interesting when $\Sigma$ is a Cantor space. The importance of the topological full group has come to the fore in the classification results of Giordano--Putnam--Skau~\cite{Giordano-Putnam-Skau95,Giordano-Putnam-Skau99}.

\medskip
Grigorchuk and Medynets announced that the topological full group of a minimal Cantor $\ZZ$-action is amenable~\cite{Grigorchuk-Medynets}. This is particularly interesting in combination with the work of Matui~\cite{Matui06}, who showed that the derived subgroup is often a finitely generated simple group. Grigorchuk--Medynets further asked in~\cite{Grigorchuk-Medynets} whether their result holds for actions of general amenable groups as well. We shall prove that it fails already for the group~$\ZZ^2$:

\begin{theorem}\label{main}
There exists a free minimal Cantor $\ZZ^2$-action whose topological full group contains a non-abelian free group.
\end{theorem}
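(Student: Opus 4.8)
The plan is to split the problem into a purely combinatorial statement about bounded-displacement bijections of the grid and a dynamical realization step. Observe first that for a free action each orbit is a copy of $\ZZ^2$, and the restriction to one orbit of an element of the topological full group is a bijection $\phi\colon\ZZ^2\to\ZZ^2$ with $\sup_{n}|\phi(n)-n|<\infty$, the displacement taking only the finitely many values coming from the finitely many clopen pieces. Such bijections form the \emph{wobbling group} $W(\ZZ^2)$. Moreover, a nontrivial full-group element moves a nonempty clopen set, which by density of every orbit in a minimal system meets every orbit; hence the restriction of $[[\ZZ^2]]$ to any single orbit is injective. So it suffices to (i) exhibit two elements $a,b\in W(\ZZ^2)$ generating the free group $F_2$, realised by a \emph{local rule} --- a prescription, reading only a bounded window of a decoration of the grid, of which translation to apply --- and (ii) produce a free minimal Cantor $\ZZ^2$-subshift whose points carry that decoration, so that $a,b$ become genuine elements $\tilde a,\tilde b$ of its topological full group.

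For step (i) I would build $a,b$ from disjoint finite \emph{gadgets}. Enumerate the reduced words $w_1,w_2,\dots$ on two letters $x,y$; for each $k$ it is enough to manufacture a finite $F_2$-set $S_k$ on which $w_k$ acts nontrivially and to draw it inside a bounded region $R_k\subset\ZZ^2$ so that the two generators act by bounded-displacement permutations of $R_k$. Placing the $R_k$ far apart and letting $a,b$ be the identity elsewhere yields global bounded-displacement bijections, and the homomorphism $\rho\colon F_2\to W(\ZZ^2)$, $x\mapsto a$, $y\mapsto b$, satisfies $\rho(w_k)\neq\mathrm{id}$ since $\rho(w_k)$ moves a point of $R_k$; thus $\rho$ is injective. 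A point worth stressing, and a first source of difficulty, is that one cannot simply transplant balls of the Cayley tree of $F_2$: the tree has exponential growth and linear diameter, incompatible with a bounded-displacement embedding into the amenable grid $\ZZ^2$. This forces the gadgets to be finite $F_2$-\emph{sets} with many relations (the local action is far from free), each merely \emph{detecting} one word, rather than pieces of a globally free action.

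For step (ii) the decoration recording the gadget structure must be organised so that (a) the resulting subshift is minimal, (b) the $\ZZ^2$-action is free, and (c) every region $R_k$ occurs in every orbit. I would obtain this from a hierarchical, self-similar (substitutive) tiling of $\ZZ^2$ that is minimal and aperiodic, into whose successive levels the gadgets $R_1,R_2,\dots$ are inserted at correspondingly growing scales; self-similarity forces uniform recurrence, hence minimality and the appearance of each $R_k$ in every orbit, while aperiodicity makes the action free (and the system Cantor). The local rule then reads the displacement of $\tilde a,\tilde b$ off a bounded window of the tiling, so that $\tilde a,\tilde b\in[[\ZZ^2]]$, and each reduced word $\tilde w_k$ moves a point of every orbit; therefore $\langle\tilde a,\tilde b\rangle\cong F_2$.

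I expect the main obstacle to be exactly this marriage in step (ii): simultaneously enforcing minimality and freeness of the $\ZZ^2$-system while carrying, uniformly recurrently and with \emph{uniformly} bounded displacement, the whole family of free-group-witnessing gadgets. It is here that dimension two is essential --- there must be enough room at each scale to route the self-avoiding paths realising ever longer words and to accommodate the increasingly large gadgets --- and the failure of the analogous one-dimensional construction is consistent with the amenability theorem of Grigorchuk and Medynets for minimal Cantor $\ZZ$-actions.
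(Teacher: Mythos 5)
Your reduction is sound, and it is in fact the implicit architecture of the paper's proof: on a minimal system the set of points moved by a nontrivial full-group element is open and nonempty, hence meets every (dense) orbit, so it suffices to realize a free group inside the wobbling group $W(\ZZ^2)$ by local rules read off a free minimal decoration. The problem is that both of your steps (i) and (ii) are left as plans, and they are exactly where the content of the theorem lies; moreover step (i) as formulated conceals a concrete obstruction. If $S_k$ is an arbitrary finite $F_2$-set detecting $w_k$ (say a finite quotient supplied by residual finiteness) and $\iota\colon S_k\to\ZZ^2$ is injective with both generators displacing every point of $\iota(S_k)$ by at most $C$, then the Schreier ball of radius $r$ around any point of $S_k$ injects into a Euclidean ball of radius $Cr$, so Schreier balls grow at most quadratically in $r$. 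Finite quotients of $F_2$ need not satisfy this (for $\mathrm{SL}_2(\mathbf{F}_p)$ with large $p$, or any expander family, balls grow exponentially up to the diameter), so ``manufacturing'' gadgets compatible with a \emph{uniform} displacement bound is a genuine constraint that your proposal neither states nor meets; and without uniformity your $a,b$ are not even wobbling bijections, nor implementable by a bounded-window local rule. Likewise in step (ii), ``a substitutive tiling into which the gadgets $R_1,R_2,\dots$ are inserted at growing scales'' is not a construction: a substitution with infinitely many distinct inserted patterns is not a substitution, and minimality together with uniform recurrence of every gadget must be proved, not invoked. You say yourself that this marriage is the main obstacle; the proof consists precisely of resolving it.

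For comparison, the paper dissolves both difficulties at one stroke by a different choice of group and gadget. Instead of $F_2$ it uses $\Delta=\langle A\rangle *\langle B\rangle *\langle C\rangle$ (three involutions; this contains a non-abelian free subgroup of finite index), acting on proper edge-colourings of the grid by ``move along the unique adjacent edge coloured $x$, if there is one''. These involutions have displacement $1$ by construction, so uniformity is automatic, and the gadget detecting a reduced word $w$ is just a vertical path spelling $w^{-1}D$: applying $w$ walks you up the path, and you certify motion because the edge above the endpoint is coloured $D$. Uniform recurrence of \emph{all} gadgets comes from an elementary arithmetic trick (put the $i$-th word's gadget, repeated periodically, on every column whose index has $2$-adic valuation $i$), and minimality is obtained for free by passing to an arbitrary minimal subsystem of the orbit closure, since the detection property (``every $h(w)$-ball contains a vertical $w^{-1}D$'') is preserved under limits. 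Note finally that your closing intuition about dimension two is off: the gadgets are one-dimensional paths, not two-dimensional regions needing routing room. What the second dimension buys is a transverse direction separating the gadgets of different words: on a single line, the gadget for a short word forces the marker $D$ to occur syndetically, while the gadget for a long word demands a long $D$-free stretch, and these demands are incompatible — which is consistent with the Grigorchuk--Medynets amenability theorem in dimension one — whereas on parallel columns they coexist peacefully.
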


Three comments are in order, see the end of this note:

\medskip\noindent
\textbf{1.} There also exist free minimal Cantor $\ZZ^2$-actions whose topological full group is amenable, indeed locally virtually abelian.

\smallskip\noindent
\textbf{2.} Minimality is fundamental for the study of topological full groups. Even for~$\ZZ$, it is easy to construct Cantor systems whose topological full group contains a non-abelian free group (using e.g.\ ideas from~\cite{vanDouwen} or~\cite{Glasner-Monod}).

\smallskip\noindent
\textbf{3.} Our example will be a minimal subshift and in this situation the topological full group is sofic by a result of~\cite{Elek-Szabo05}.

%==============================================================================================
%==============================================================================================
\section{Proof of the Theorem}
%==============================================================================================
We realize the Cantor space as the space $\Sigma$ of all proper edge-colourings of the ``quadrille paper'' two-dimensional Euclidean lattice by the letters $A,B,C,D,E,F$ (with the topology of pointwise convergence relative to the discrete topology on the finite set of letters). Recall here that an edge-colouring is called \emph{proper} if the edges adjacent to a given vertex are coloured differently. There is a natural $\ZZ^2$-action on $\Sigma$ by homeomorphisms defined by translations.

\medskip
To each letter $x\in\{A, \ldots, F\}$ corresponds a continuous involution of $\Sigma$, which we still denote by the same letter. It is defined as follows on $\sigma\in\Sigma$: if the vertex zero is connected to one of its four neighbours $v$ by an edge labelled by $x$, then $v$ is uniquely determined and $x\sigma$ will be the colouring $\sigma$ translated towards $v$ (i.e.\ the origin is now where $v$ was). Otherwise, $x\sigma=\sigma$. This involution is contained in the topological full group of the $\ZZ^2$-action.

\medskip
We have thus a homomorphism from the free product $\langle A\rangle * \cdots * \langle F\rangle$ to the topological full group. Notice that this free product preserves any $\ZZ^2$-invariant subset of $\Sigma$. We shall establish Theorem~\ref{main} by proving that $\Sigma$ contains a minimal non-empty closed $\ZZ^2$-invariant subset $M$ on which the $\ZZ^2$-action is free and on which the action of $\Delta:=\langle A\rangle * \langle B\rangle * \langle C\rangle$ is faithful. This implies the theorem indeed, for $\Delta$ has a (finite index) non-abelian free subgroup.

\bigskip
A \emph{pattern} of a colouring $\sigma\in \Sigma$ is the isomorphism class of a finite labelled subgraph of $\sigma$. We call $\sigma$ \emph{homogeneous} if for any pattern
$P$ of $\sigma$ there is a number $f(P)$ such that the $f(P)$-neighbourhood of any vertex in the lattice contains the pattern $P$. 
The following facts are well-known and elementary (see e.g.~\cite{Gottschalk46}).

\begin{lemma}\label{lem:Got}
The orbit closure of $\sigma\in \Sigma$ is minimal if and only if $\sigma$ is homogeneous. In that case, any $\tau$ in the orbit closure has the same patterns as $\sigma$ and is homogenous with the same function $f$.\qed
\end{lemma}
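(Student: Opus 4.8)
The plan is to run the classical equivalence between minimality and uniform recurrence, phrased here through patterns. The basic dictionary is that to each finite pattern $P$ one associates the cylinder set $U_P\subseteq\Sigma$ of colourings exhibiting $P$ at a fixed anchor position; these sets are clopen, and a translate $g\cdot U_P$ records the occurrence of $P$ at the shifted position $g$. Writing $X=\overline{\ZZ^2\cdot\sigma}$ for the orbit closure, the first routine observation is that since $U_P$ is open, $X\cap U_P\neq\varnothing$ if and only if some translate of $\sigma$ already lies in $U_P$, that is, iff $P$ is a pattern of $\sigma$; and by invariance of $X$ every $\tau\in X$ can exhibit only patterns of $\sigma$.

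For the direction \emph{homogeneous $\Rightarrow$ minimal}, I would show that every orbit in $X$ is dense, which suffices. Fix $\tau\in X$ and $n$; it is enough to find a translate of $\tau$ agreeing with $\sigma$ on the ball $B_n$. Approximating $\tau$ by a translate of $\sigma$ on a much larger ball $B_N$ and invoking homogeneity — the pattern $\sigma|_{B_n}$ occurs within distance $f(\sigma|_{B_n})$ of the origin — one locates a copy of $\sigma|_{B_n}$ well inside $B_N$, hence inside $\tau$, and translates $\tau$ to bring it to the origin. This gives $\sigma\in\overline{\ZZ^2\cdot\tau}$, whence $X=\overline{\ZZ^2\cdot\tau}$, and minimality follows.

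For \emph{minimal $\Rightarrow$ homogeneous} (which will simultaneously yield the last assertion), fix a pattern $P$ of $\sigma$. The set $W=\bigcup_{g}g\cdot U_P$ of colourings in which $P$ occurs is open and $\ZZ^2$-invariant, and it meets $X$ since $\sigma\in W$; as $W^{c}\cap X$ is closed and invariant but omits $\sigma$, minimality forces $X\subseteq W$, so $P$ occurs in every $\tau\in X$. Here is the crucial compactness step: $\{g\cdot U_P\}_{g\in\ZZ^2}$ is then an open cover of the compact set $X$, so finitely many translates $g_1\cdot U_P,\dots,g_k\cdot U_P$ already cover $X$. Taking $f(P)$ to be (slightly more than) $\max_i|g_i|$, every $\tau\in X$ — in particular $\sigma$ itself — lies in some $g_i\cdot U_P$, which means $P$ occurs within distance $f(P)$ of the anchor; applying this to each translate $T_v\tau$ shows $P$ recurs within $f(P)$ of \emph{every} vertex $v$. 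Thus $\sigma$ is homogeneous. Moreover the finite subcover, hence $f(P)$, depends only on $X$ and $P$, and in a minimal system $X=\overline{\ZZ^2\cdot\tau}$ for every $\tau\in X$; so the very same argument applies verbatim to any such $\tau$, giving the same patterns and the same function $f$.

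The main obstacle is precisely this compactness extraction: upgrading the pointwise statement ``$P$ occurs somewhere in each configuration'' to the uniform statement ``$P$ occurs within a fixed radius $f(P)$ around every vertex.'' Everything else is bookkeeping with translation conventions together with the standard fact that in a minimal system the orbit closure of any point is the whole system.
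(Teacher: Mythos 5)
The paper offers no proof of this lemma: it is stated with a \qed as ``well-known and elementary'' and delegated to the reference \cite{Gottschalk46}, so there is no internal argument to compare against. Your proof is correct and is precisely the classical Gottschalk argument that citation stands for: the dictionary between patterns and clopen cylinder sets $U_P$; for \emph{homogeneous $\Rightarrow$ minimal}, approximating $\tau\in X$ by a translate of $\sigma$ on a large ball and using homogeneity to find a copy of $\sigma|_{B_n}$ inside the region of agreement, giving $\sigma\in\overline{\ZZ^2\cdot\tau}$ and hence density of every orbit; and for the converse, the key compactness step of extracting a finite subcover from $\{g\cdot U_P\}_{g\in\ZZ^2}$ over $X$, which upgrades ``$P$ occurs somewhere in every configuration'' to ``$P$ occurs within a uniform radius $f(P)$ of the anchor,'' and then applying this to all translates $T_v\tau$ to get recurrence near every vertex. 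Your closing remark that the finite subcover, hence $f(P)$, depends only on $X$ and $P$ is exactly what yields the second assertion of the lemma (same patterns, same function $f$, for every $\tau$ in the orbit closure). One cosmetic point: since the paper defines a pattern as an isomorphism class of a finite labelled subgraph, you should fix a positioned representative of $P$ when defining the anchored cylinder $U_P$; with translation-equivalence as the intended notion of occurrence this changes nothing in your argument.
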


Now, we first enumerate the non-trivial elements of the free product $\Delta$.
Then, we label the integers with the natural numbers in such a way that the following property holds: for each $i\in \NN$ there is $g(i)\geq 1$ such that any subinterval of length $g(i)$ in $\ZZ$ contains at least one element labelled by $i$. Such a labelling exists: for instance, label an integer by the exponent of~$2$ in its prime factorization (with an arbitrary adjustment for~$0$).

\medskip
We use the labelling above to construct a specific proper edge-colouring $\lambda\in\Sigma$. Let $w$ be a word in $\Delta$ that is the $i$-th in the enumeration. Consider the vertical vertex-lines $(v,\cdot)$ in the lattice such that $v$ is labelled by $i$. Colour those vertical lines the following way. Starting at the point $(v, 0)$, copy the string $w$ onto the half-line above, beginning from the right end of $w$ (i.e.\ write $w^{-1}$ upwards).  Then colour the following edge by $D$, then copy the string $w$ again and repeat the process ad infinitum. Also, continue the process below $(v, 0)$ so as to obtain a periodic colouring of the whole vertical line. Repeating the process for all non-trivial words $w$, we have coloured all vertical lines. Finally, colour all horizontal lines periodically with $E$ and $F$.

\medskip
The resulting colouring $\lambda$ has the following property.
For any non-trivial $w\in \Delta$ there is a number $h(w)$ such that the
$h(w)$-neighbourhood of any vertex of the lattice contains a vertical string
of the form $w^{-1}D$. Let $\Omega(\lambda)\subseteq \Sigma$ be the $\ZZ^2$-orbit closure of
$\lambda$. Then all the elements of $\Omega(\lambda)$ have the same property.
Now, let $M$ be an arbitrary minimal subsystem of $\Omega(\lambda)$ (in fact it is easy to see that $\lambda$ is homogeneous and hence $\Omega(\lambda)$ is already minimal). Notice that the $\ZZ^2$-action on $M$ is free because $\lambda$ has no period.
In order to prove the theorem, it is enough to show that for any $\sigma$ in
$M$ and any non-trivial $w\in \Delta$ there exists a $\ZZ^2$-translate of $\sigma$ which is not
fixed by $w$.

\medskip
Pick thus any $\sigma\in M$. Then, by the above property of the
orbit closure, there exists a translate $\tau$ of $\sigma$ such that the vertical
half-line pointing upwards from the origin starts with the string $w^{-1}D$. Hence
if we apply $w$ to the translate we reach a point $\tau$ such that
the colour of the edge pointing upwards from the the origin is coloured by $D$.
Thus $\tau$ is not fixed by $w$, finishing the proof.\qed

%==============================================================================================
%==============================================================================================
\section{Comments}
Some $\ZZ^2$-systems have a completely opposite behaviour to the ones constructed for Theorem~\ref{main}. We shall see this by extending the method of Proposition~2.1 in~\cite{Matui_fullII}.

\medskip
Recall that the \emph{$p$-adic odometer} is the minimal Cantor system given by adding~$1$ in the ring $\ZZ_p$ of $p$-adic integers. Taking the direct product, we obtain a minimal Cantor $\ZZ^2$-action on $\Sigma:= \ZZ_p\times \ZZ_p$.  The proposition below and its proof can be immediately extended to products of more general odometers.

\begin{proposition}\label{odo}
The full group of this minimal Cantor $\ZZ^2$-system is an increasing union of virtually abelian groups.
\end{proposition}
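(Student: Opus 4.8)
The plan is to exhaust the full group by an explicit increasing chain $\Gamma_1\subseteq\Gamma_2\subseteq\cdots$ of virtually abelian subgroups, exploiting the profinite structure $\Sigma=\ZZ_p\times\ZZ_p=\varprojlim_n(\ZZ/p^n\ZZ)^2$. For each $n$ I partition $\Sigma$ into the $p^{2n}$ clopen \emph{boxes} $(a+p^n\ZZ_p)\times(b+p^n\ZZ_p)$, indexed by $(a,b)\in(\ZZ/p^n\ZZ)^2$. The key elementary fact is that translation by any integer vector is compatible with reduction modulo $p^n$, so every element of $\ZZ^2$ permutes the boxes of this partition. I define $\Gamma_n$ to be the set of elements of the full group that send each box onto a box by the restriction of a single element of $\ZZ^2$.

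First I would verify that $\Gamma_n$ is a subgroup and that the chain is increasing. Closure under composition is the only point needing a word: if $\phi$ carries a box $B$ to $\phi(B)$ by a translation $u$ and $\psi$ carries $\phi(B)$ onward by a translation $v$, then $\psi\phi$ carries $B$ by the single translation $u+v$, so composing does not force any refinement of the partition; inverses are handled symmetrically. The inclusion $\Gamma_n\subseteq\Gamma_{n+1}$ holds because each base-$n$ box is a disjoint union of base-$(n+1)$ boxes and integer translations respect the finer partition as well. Finally the $\Gamma_n$ exhaust the full group: by definition an element is piecewise a translation on finitely many clopen pieces, each clopen piece is a finite union of boxes at some level, and taking $n$ large enough that all the pieces are unions of base-$n$ boxes places the element in $\Gamma_n$.

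The core of the proof is that each $\Gamma_n$ is virtually abelian. Here I would use the homomorphism $\pi_n$ from $\Gamma_n$ to the (finite) symmetric group on the set of $p^{2n}$ boxes, sending an element to the permutation it induces. The kernel consists of the elements fixing every box setwise and acting within each box by an integer translation that is trivial modulo $p^n$, i.e.\ by a vector in $p^n\ZZ\times p^n\ZZ$ read inside that box. Translations supported on distinct boxes commute, and on a single box the map sending $(k,l)\in\ZZ^2$ to translation by $(p^nk,p^nl)$ is injective because the $\ZZ^2$-action is free; hence $\ker\pi_n$ is free abelian of rank $2p^{2n}$. Being a normal abelian subgroup of finite index, it witnesses that $\Gamma_n$ is virtually abelian, and the proposition follows.

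I expect no serious difficulty of an analytic kind; the only thing to watch is the bookkeeping around the kernel. One must check that a translation by a vector in $p^n\ZZ\times p^n\ZZ$ indeed fixes each box setwise while still moving points inside it, and that these internal translations form a genuine copy of $\ZZ^2$ rather than a proper quotient, both of which rest on the freeness of the $\ZZ^2$-action on $\Sigma$. Thus the entire content lies in arranging the profinite combinatorics so that composition never escapes a fixed level $n$, after which the virtual abelianness of $\Gamma_n$ is essentially forced.
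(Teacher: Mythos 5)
Your proof is correct and follows essentially the same route as the paper: the paper also exhausts the full group by the subgroups of elements compatible with the $n$-cylinder partition $\mathscr{P}_n$ and observes that each such subgroup embeds in $(\ZZ^2)^{\mathscr{P}_n}\rtimes\mathrm{Sym}(\mathscr{P}_n)$, which is virtually abelian. Your kernel computation via the permutation homomorphism $\pi_n$ is just an explicit unwinding of that semi-direct product embedding.
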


\begin{proof}[Proof (compare~\cite{Matui_fullII})]
Consider $\ZZ_p$ as the space of $\ZZ/p\ZZ$-valued (infinite) sequences. Given a pair of finite sequences of length~$n$, we obtain an $n$-cylinder set in $\Sigma$ as the space of pairs of sequences starting with the given prefixes. Thus, $n$-cylinders determine a partition $\mathscr{P}_n$ of $\Sigma$ into $p^{2n}$ clopen subsets. Moreover, the clopen partition associated to any given element $g$ of the topological full group can be refined to $\mathscr{P}_n$ when $n$ is large enough. It remains only to observe that the collection of all such $g$, when $n$ is fixed, is a subgroup of the semi-direct product $(\ZZ^2)^{\mathscr{P}_n} \rtimes \mathrm{Sym}(\mathscr{P}_n)$, where $\mathrm{Sym}(\mathscr{P}_n)$ is the permutation group of the co\"ordinates indexed by $\mathscr{P}_n$.
\end{proof}

Regarding the second comment of the introduction, suffice it to say that a \emph{generic} proper colouring of the linear graph by three letters $A,B,C$ gives a faithful non-minimal representation of the free product $\langle A\rangle * \langle B\rangle * \langle C\rangle$ into the topological full group of the associated $\ZZ$-subshift (compare~\cite{vanDouwen} or~\cite{Glasner-Monod} for generic constructions).

\bigskip
As for the last comment, Proposition~5.1(1) in~\cite{Elek-Szabo05} implies that the topological full group of any minimal subshift of any amenable group is a sofic group (in the notations of~\cite{Elek-Szabo05}, the kernel $N_\Gamma$ is trivial by an application of Lemma~\ref{lem:Got}). In combination with Matui's results~\cite{Matui06}, this already shows the existence of a sofic finitely generated infinite simple group without appealing to~\cite{Grigorchuk-Medynets}.

%==============================================================================================
%==============================================================================================
\bibliographystyle{../BIB/abbrv}
\bibliography{../BIB/ma_bib}

\end{document}